\newtheorem{theorem}{Theorem}[section]
\newtheorem{lemma}[theorem]{Lemma}
\newtheorem{proposition}[theorem]{Proposition}
\newtheorem{corollary}[theorem]{Corollary}
\newtheorem*{TheoremA}{Main Theorem}
\theoremstyle{definition}
\theoremstyle{remark}
\numberwithin{equation}{section}
\newcommand\C{\mathbb C}
\newcommand\D{\mathbb D}
\newcommand\rimu{R^\infty(K,\mu)}
\newcommand\rikmu{R^\infty(K,\mu)}
\newcommand\limu{L^\infty(\mu)}
\renewcommand\i{\infty}
\newcommand\area{{\frak m}}
\begin{document}

\setcounter{page}{1}

\title[Spectral Mapping Theorems]{A Note on Spectral Mapping Theorems for Subnormal Operators}

\author[L. Yang]{Liming Yang$^1$}

\address{$^1$Department of Mathematics, Virginia Polytechnic and State University, Blacksburg, VA 24061.}
\email{\textcolor[rgb]{0.00,0.00,0.84}{yliming@vt.edu}}

\subjclass[2010]{Primary 47B38; Secondary 46E15}

\keywords{Rationally Cyclic, Subnormal Operator, and Spectral Mapping Theorem}

%\date{Received: xxxxxx; Revised: yyyyyy; Accepted: zzzzzz.
%\newline \indent $^{*}$Corresponding author}

\begin{abstract} For a compact subset $K\subset \C$ and a positive finite Borel measure $\mu$ supported on $K,$ let $\text{Rat}(K)$ denote the space of rational functions with poles off $K,$ let $R^\i (K,\mu)$ be the weak-star closure  of $\text{Rat}(K)$ in $L^\i(\mu),$ and let $R^2 (K,\mu)$ be the closure  of $\text{Rat}(K)$ in $L^2(\mu).$  
We show that there exists a compact subset $K\subset \C,$ a positive finite Borel measure $\mu$ supported on $K,$ and a function $f\in R^\i (K,\mu)$ such that $R^\i (K,\mu)$ has no non-trivial direct $L^\i$ summands, $f$ is invertible in  $R^2 (K,\mu)\cap L^\i(\mu),$ and $f$ is not invertible in $R^\i (K,\mu).$   The result answers an open question concerning spectral mapping theorems for subnormal operators raised by J. Dudziak \cite{dud84} in 1984.
\end{abstract} \maketitle

\section{\textbf{Introduction}}

For a Borel subset $B$ of the complex plane $\C,$ let $M_0(B)$ denote the set of finite complex-valued Borel measures that are compactly supported in $B$ and let $M_0^+(B)$ be the set of positive measures in $M_0(B).$ The support of $\nu\in M_0(\C),$ $\text{spt}(\nu),$ is the smallest closed set that has full $|\nu|$ measure. For a Borel set $A\subset \C,$ $\nu_A$ denotes $\nu$ restricted to $A.$

For a compact subset $K\subset \C$ and $\mu\in M_0^+(K),$ the functions in 
$\mbox{Rat}(K) := \{q:\mbox{$q$ is a rational function 
with poles off $K$}\}$
are members of $L^\i (\mu)$. We let $\rikmu$ be the weak-star closure of $\mbox{Rat}(K)$ in $\limu.$ 
There exists a Borel partition $\{\Delta_0,\Delta_1\}$ of $K$ such that 
\begin{eqnarray}\label{RIDecomp}
\ \rikmu = L^\i (\mu|_{\Delta_0}) \oplus R^\i (K,\mu|_{\Delta_1}) 
\end{eqnarray}
where $R^\i (K,\mu|_{\Delta_1})$ contains no non-trivial $L^\i$ summands (see \cite[Proposition 1.16 on page 281]{conway} ).
Call $\rikmu$ pure if $\Delta_0 = \emptyset$ in \eqref{RIDecomp}. Because of \eqref{RIDecomp}, we shall assume that $\rikmu$ is pure.
The {\em envelope} $E(K,\mu)$ with respect to $K$ and $\mu\in M_0^+(K)$ is the set of points $\lambda \in  K$ such that there exists $\mu_\lambda\in M_0(K)$ that is absolutely continuous with respect to $\mu$ such that $\mu_\lambda(\{\lambda\}) = 0$ and $\int f(z) d\mu_\lambda(z) = f(\lambda)$ for each $f \in \text{Rat}(K).$ From Proposition \ref{EProp} (b), $E(K,\mu)$ is a nonempty $\area$ measurable set with area density one at each of its points, where $\area$ is the area (Lebesgue) measure on $\C$.
For $\lambda\in E(K,\mu)$ and $f\in R^\i (K,,\mu),$ set $\rho_{K,\mu}(f)(\lambda) = \int fd\mu_\lambda.$ Clearly $\rho_{K,\mu}(f)(\lambda)$ is independent of the particular $\mu_\lambda$ chosen. 
We thus have a map $\rho_{K,\mu}$ called Chaumat's map for $K$ and $\mu,$ which associates
to each function in $\rikmu$ a point function on $E(K,\mu).$ Chaumat's Theorem \cite{cha74} (also see \cite[page 288]{conway}) states the following: The map $\rho_{K,\mu}$ is an isometric isomorphism and a weak-star homeomorphism from $\rikmu$ onto $R^\i (\overline {E(K,\mu)}, \area_{E(K,\mu)}),$ where $\overline {B}$ denotes the closure of the set $B\subset \C$. 

For $\mathcal H$ a complex separable Hilbert space, $\mathcal L(\mathcal H)$ denotes the space of all bounded linear operators on $\mathcal H.$ The spectrum of an operator $T\in \mathcal L(\mathcal H)$ is denoted $\sigma(T).$ An operator $S\in \mathcal L(\mathcal H)$ is called {\em  subnormal} if there exists a complex separable Hilbert space $\mathcal K$ containing $\mathcal H$ and a normal operator $N\in \mathcal L(\mathcal K)$ such that $N\mathcal H \subset \mathcal H$ and $S = N|_{\mathcal H}.$ Such an $N$ is called a {\em minimal normal extension} (mne) of $S$ if the smallest closed subspace of $\mathcal K$ containing $\mathcal H$ and reducing $N$ is $\mathcal K$ itself. Any two mnes of $S$ are unitarily equivalent in a manner that fixes $S.$ 
The spectrum $\sigma (S)$ is the union of $\sigma (N)$ and some collection of bounded components of $\C\setminus \sigma (N).$ The book \cite{conway} is a good reference for basic information for subnormal operators.

For a subnormal operator $S\in \mathcal L(\mathcal H)$ with $N=mne(S),$ let $\mu$ be the scalar-valued spectral measure (svsm) for $N.$ Since $\text{spt}(\mu) = \sigma(N) \subset  \sigma (S),$ we see that $\text{Rat}(\sigma (S)) \subset L^\i(\mu).$
Thus, for $f\in R^\i(\sigma (S), \mu),$ the normal operator $f(N)$ is well defined and $f(N)\mathcal H \subset \mathcal H.$
Therefore, the operator $f(S) := f(N) |_{\mathcal H}$ defines a functional calculus for $S$ and $f\in R^\i(\sigma (S), \mu).$ 
 The operator $S$ is pure if $S$ does not contain a non-trivial normal summand and $S$ is $R^\i$ pure if $R^\i(\sigma (S), \mu)$ is pure. Clearly, if $S$ is pure, then $S$ is $R^\i$ pure.

The following theorem in \cite[Theorem 1.1 and Corollary 1.2]{y23} proves the conjecture $\diamond$ posed by J. Dudziak in \cite[on page 386]{dud84}.

\begin{theorem}\label{InvTheorem} (Yang 2023)
Let $S\in \mathcal L(\mathcal H)$ be a subnormal operator and let $\mu$ be the scalar-valued spectral measure for $N=mne(S).$ Assume that $S$ is $R^\i$ pure. If for $f\in R^\i(\sigma (S), \mu),$ there exists $\epsilon_f > 0$ such that 
\[
\ |\rho_{\sigma (S), \mu}(f)(z)| \ge \epsilon_f,~ \area_{E(\sigma (S), \mu)}-a.a.,
\] 
then $f$ is invertible in $R^\i(\sigma (S), \mu)$ and $f(S)$ is invertible.	Consequently, $\sigma(f(S)) \subset f(\sigma(S)),$ where $f(\sigma(S))$ denotes the essential range of $\rho_{\sigma (S), \mu}(f)$ in $L^\i (\area_{E(\sigma (S), \mu)}).$
\end{theorem}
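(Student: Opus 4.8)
The plan is to reduce the whole assertion, via Chaumat's Theorem, to a single invertibility statement about $R^\infty$ of area measure on the envelope, and then to settle that statement by a Cauchy-transform and analytic-capacity argument. Write $K=\sigma(S)$, $E=E(K,\mu)$, and $g=\rho_{K,\mu}(f)$. Since $\rho_{K,\mu}$ is an isometric isomorphism and weak-star homeomorphism of $R^\infty(K,\mu)$ onto $R^\infty(\overline E,\area_E)$, and since the point evaluations $h\mapsto\rho_{K,\mu}(h)(\lambda)=\int h\,d\mu_\lambda$ are multiplicative, $\rho_{K,\mu}$ is in fact an algebra isomorphism carrying $1$ to $1$. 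Hence $f$ is invertible in $R^\infty(K,\mu)$ if and only if $g$ is invertible in $R^\infty(\overline E,\area_E)$, in which case $\rho_{K,\mu}^{-1}(1/g)$ is the inverse of $f$. The hypothesis $|\rho_{K,\mu}(f)|\ge\epsilon_f$ $\area_E$-a.e.\ says precisely that $|g|\ge\epsilon_f$ $\area_E$-a.e., so the pointwise reciprocal $1/g$ already lies in $L^\infty(\area_E)$ with $\|1/g\|_\infty\le\epsilon_f^{-1}$. Thus everything collapses to one claim: \emph{$1/g\in R^\infty(\overline E,\area_E)$.}

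I would attack this claim by duality. By definition of the weak-star closure, $1/g\in R^\infty(\overline E,\area_E)$ if and only if $\int (1/g)\,d\nu=0$ for every $\nu$ in $\mathcal A:=\{\eta\,\area_E:\eta\in L^1(\area_E),\ \nu\perp\text{Rat}(\overline E)\}$. The easy half of the structure is that multiplication by $g$ maps $\mathcal A$ into itself: if $\nu\in\mathcal A$ and $q_n\in\text{Rat}(\overline E)$ converge weak-star to $g$, then for any $q\in\text{Rat}(\overline E)$ one has $qq_n\in\text{Rat}(\overline E)$, whence $\int qg\,d\nu=\lim_n\int qq_n\,d\nu=0$, so $g\nu\in\mathcal A$. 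The substance is the converse surjectivity: I would show that for each $\nu\in\mathcal A$ the measure $\nu/g$ (which is in $L^1$ because $|1/g|\le\epsilon_f^{-1}$) again belongs to $\mathcal A$. Granting this, for any $\nu\in\mathcal A$ write $\nu=g\,(\nu/g)$ with $\nu/g\in\mathcal A$ and compute $\int(1/g)\,d\nu=\int(1/g)\,g\,d(\nu/g)=\int 1\,d(\nu/g)=0$, since $\nu/g$ annihilates the constant $1\in\text{Rat}(\overline E)$. This yields $1/g\in R^\infty(\overline E,\area_E)$.

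The surjectivity statement $\nu/g\in\mathcal A$ is where the real work lies, and I expect it to be the main obstacle. Here I would invoke the analytic structure of the envelope: $E$ has area density one at each of its points, $\area_E$-a.e.\ point of $E$ carries an analytic bounded point evaluation (via Thomson's theorem and Chaumat's map), and a measure $\nu\in\mathcal A$ is characterized by the vanishing of its Cauchy transform $\CT\nu(z)=\int (w-z)^{-1}\,d\nu(w)$ off $\overline E$ together with the boundary behavior prescribed by Plemelj's Formula. The plan is to realize $g$ as the nontangential analytic boundary function of an analytic $G$ with $|G|\ge\epsilon_f$, so that $1/G$ is again analytic and bounded, and then to transfer the vanishing of $\CT\nu$ to that of $\CT(\nu/g)$ by a division argument, controlling the exceptional sets (of zero area, hence zero analytic capacity) by Tolsa's estimates on the Cauchy transform and analytic capacity, together with the Aleman--Richter--Sundberg description of $R^\infty$. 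The point that makes the reciprocal stay in the algebra is that $|g|\ge\epsilon_f$ holds \emph{a.e.\ on all of $E$}, the analogue of a function being bounded below on its entire domain of analyticity; the delicacy is that $E$ is only a full-density measurable set rather than an open set, so the classical analytic-function reasoning must be replaced by the capacitary machinery of Tolsa.

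Finally, once $f$ is invertible in $R^\infty(K,\mu)$ with inverse $f^{-1}:=\rho_{K,\mu}^{-1}(1/g)$, multiplicativity of the functional calculus $h\mapsto h(S)=h(N)|_{\mathcal H}$ gives $f(S)f^{-1}(S)=(ff^{-1})(S)=1(S)=I$ and likewise $f^{-1}(S)f(S)=I$, so $f(S)$ is invertible. For the spectral inclusion, fix $\lambda\notin f(\sigma(S))$, i.e.\ $\lambda$ outside the essential range of $\rho_{K,\mu}(f)$; then $|\rho_{K,\mu}(f)-\lambda|=|\rho_{K,\mu}(f-\lambda)|\ge\epsilon$ $\area_E$-a.e.\ for some $\epsilon>0$, so $f-\lambda$ satisfies the hypothesis and the argument above shows $f(S)-\lambda$ is invertible, that is, $\lambda\notin\sigma(f(S))$. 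Hence $\sigma(f(S))\subset f(\sigma(S))$.
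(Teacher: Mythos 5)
This theorem is not proved in the paper you were given: it is quoted from \cite{y23} (where its proof occupies an entire separate article), so the only question is whether your argument is self-contained and correct. It is not, because its central claim is never proved. Your reductions at the periphery are fine: Chaumat's map is multiplicative on $E(K,\mu)$ (each point of the envelope is a weak-star continuous homomorphism), so it is an algebra isomorphism and the problem does transfer to showing $1/g\in R^\i(\overline E,\area_E)$; the $L^1$--$L^\i$ duality setup is standard; and the final paragraph (multiplicativity of the functional calculus, and applying the result to $f-\lambda$ to get $\sigma(f(S))\subset f(\sigma(S))$) is correct. But your ``key claim'' --- that $\nu/g\in\mathcal A$ for every annihilating measure $\nu$ --- is not a lemma on the way to the theorem; it is \emph{equivalent} to the theorem. (If $1/g\in R^\i$ then $\nu/g=(1/g)\nu\in\mathcal A$ by the same weak-star limit argument you use for $g\nu$; conversely your computation derives $1/g\in R^\i$ from the claim.) So the proposal proves exactly the easy implications and restates the hard one, then replaces its proof with a paragraph of intentions (``realize $g$ as a nontangential boundary function\dots transfer the vanishing of $\CT\nu$ by a division argument\dots controlling exceptional sets by Tolsa's estimates''). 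Why the Cauchy transform of $\nu/g$ should vanish where $\CT\nu$ does is precisely the forty-year-old difficulty of Dudziak's conjecture; nothing in your sketch engages it.

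Moreover, the machinery you gesture at is partly misdirected. Thomson's theorem and the Aleman--Richter--Sundberg theory concern \emph{polynomial} approximation $P^t(\mu)$; their conclusions (abundance of analytic bounded point evaluations, nontangential limit structure) are known to \emph{fail} for rational approximation --- Fernstr\"om's Swiss cheese gives a pure $R^2(K,\area_K)$ with no bounded point evaluations at all --- and this failure is exactly why the rational case resisted so long. You also conflate points of $E(K,\mu)$ (weak-star continuous homomorphisms of $R^\i$, represented by $L^1$ densities) with bounded point evaluations for $R^2$ (which require $L^2$ representing functions); these are different notions and neither implies the other here. A correct proof, as in \cite{y23}, requires genuinely new capacitary estimates for Cauchy transforms built on Tolsa's semiadditivity theorem, not a routine transfer of the polynomial-case results. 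As it stands, your argument establishes the theorem only modulo the theorem itself.
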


The second open question posted in \cite[page 386]{dud84} is that of whether the inclusion  $f(\sigma(S)) \subset \sigma(f(S))$ holds for every $R^\i$ pure subnormal operator $S$ and every $f\in R^\i(\sigma (S), \mu).$ J. Dudziak conjectured that the answer to the  question would be no and suggested that an appropriate place to look for an example would be among rationally cyclic $R^\i$ pure subnormal operators. For a compact subset $K\subset \C$ and $\mu \in M_0^+(K),$ let $R^2(K,\mu)$ denote the closure of $\text{Rat}(K)$ in $L^2(\mu)$ and let $S_\mu$ denote the multiplication by $z$ on $R^2(K,\mu).$ Clearly, $R^2(K,\mu) = R^2(\sigma(S_\mu),\mu).$ So we will always assume $K = \sigma(S_\mu).$ It is well known that a rationally cyclic subnormal operator is unitarily equivalent to $S_\mu$ on $R^2(\sigma(S_\mu),\mu)$ (see \cite[III.5.2]{c81}). Our main theorem provides such an example.

\begin{TheoremA}
There exists $\mu \in M_0^+(\C)$ and a function $f \in R^\i(K,\mu),$ where $K = \text{spt}\mu,$ such that $R^\i(K,\mu)$ is pure,
$|f| \ge \epsilon,~\mu-a.a.$ for some $\epsilon > 0,$ $f$ is invertible in $R^2(K,\mu) \cap L^\i (\mu),$ and $f$ is not invertible in $R^\i (K,\mu).$
\end{TheoremA}

As a result, our corollary below answers the above open question negatively.

\begin{corollary}
There exists $\mu \in M_0^+(\C)$ and there exists a function $f \in R^\i(\sigma(S_\mu),\mu)$ such that $S_\mu$ is $R^\i$ pure and $f(\sigma(S_\mu)) \setminus \sigma(f(S_\mu)) \ne \emptyset.$	
\end{corollary}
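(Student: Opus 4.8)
The plan is to derive the Corollary directly from the Main Theorem together with Theorem~\ref{InvTheorem}. Let $\mu\in M_0^+(\C)$ and $f\in R^\infty(K,\mu)$, with $K=\text{spt}\mu$, be as furnished by the Main Theorem, and recall that here $K=\sigma(S_\mu)$ and $\mu$ is the scalar-valued spectral measure of $N=mne(S_\mu)$. Purity of $R^\infty(K,\mu)=R^\infty(\sigma(S_\mu),\mu)$ is precisely the assertion that $S_\mu$ is $R^\infty$ pure, so the hypotheses of Theorem~\ref{InvTheorem} hold. It therefore suffices to produce one point of $f(\sigma(S_\mu))$ --- the essential range of $\rho_{\sigma(S_\mu),\mu}(f)$ in $L^\infty(\area_{E(\sigma(S_\mu),\mu)})$ --- that does not lie in $\sigma(f(S_\mu))$, and I claim that $0$ is such a point.

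First I would show $0\in f(\sigma(S_\mu))$ by contraposition. If $0$ were not in the essential range of $\rho_{\sigma(S_\mu),\mu}(f)$, there would be $\epsilon_f>0$ with $|\rho_{\sigma(S_\mu),\mu}(f)|\ge\epsilon_f$ holding $\area_{E(\sigma(S_\mu),\mu)}$-almost everywhere; since $S_\mu$ is $R^\infty$ pure, Theorem~\ref{InvTheorem} would then make $f$ invertible in $R^\infty(K,\mu)$, contradicting the Main Theorem. Hence $0$ lies in the essential range, that is $0\in f(\sigma(S_\mu))$.

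Next I would show $0\notin\sigma(f(S_\mu))$, i.e. that $f(S_\mu)$ is invertible. As $N$ is multiplication by $z$ on $L^2(\mu)$, the operator $f(S_\mu)=f(N)|_{R^2(K,\mu)}$ is multiplication by $f$ on $R^2(K,\mu)$. By the Main Theorem $f$ is invertible in the algebra $R^2(K,\mu)\cap L^\infty(\mu)$, so there is $g\in R^2(K,\mu)\cap L^\infty(\mu)$ with $fg=1$ $\mu$-a.e. The key observation is that multiplication by such a $g$ carries $R^2(K,\mu)$ boundedly into itself: given $h\in R^2(K,\mu)$, pick $r_n\in\text{Rat}(K)$ with $r_n\to h$ and $q_m\in\text{Rat}(K)$ with $q_m\to g$ in $L^2(\mu)$; since each $r_n$ is bounded on $K$, $q_m r_n\to g r_n$ in $L^2(\mu)$, so $g r_n\in R^2(K,\mu)$, and then $gh=\lim_n g r_n\in R^2(K,\mu)$ with $\|gh\|_2\le\|g\|_\infty\|h\|_2$. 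Thus multiplication by $g$ is a bounded two-sided inverse of $f(S_\mu)$ on $R^2(K,\mu)$, so $f(S_\mu)$ is invertible and $0\notin\sigma(f(S_\mu))$. Combining the two steps yields $0\in f(\sigma(S_\mu))\setminus\sigma(f(S_\mu))$, which is therefore nonempty.

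The genuine difficulty is not this deduction but the Main Theorem on which it rests, and that is where I expect the main obstacle. One must produce $\mu$ and $f$ for which $|f|\ge\epsilon$ $\mu$-a.e. and $1/f\in R^2(K,\mu)\cap L^\infty(\mu)$ --- making $f$ invertible in $R^2(K,\mu)\cap L^\infty(\mu)$ and, by the argument above, $f(S_\mu)$ invertible --- yet for which Chaumat's map $\rho_{K,\mu}(f)$ is not bounded below on $E(K,\mu)$, so that Theorem~\ref{InvTheorem} refuses invertibility of $f$ in $R^\infty(K,\mu)$. Since $E(K,\mu)\subset K=\text{spt}\mu$ and $\rho_{K,\mu}(f)(\lambda)=\int f\,d\mu_\lambda$ with $\mu_\lambda$ a complex representing measure satisfying $\int d\mu_\lambda=1$ but of possibly large total variation, the mechanism to exploit is cancellation: one seeks $\lambda_0\in E(K,\mu)$ at which $\mu_{\lambda_0}$ integrates the bounded-below $f$ to $0$, after which the analytic character of $\rho_{K,\mu}(f)$ propagates smallness of $|\rho_{K,\mu}(f)|$ to a positive-area subset of $E(K,\mu)$. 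Engineering a support $K$, a measure $\mu$ with a sufficiently rich envelope, and the required cancellation, all while keeping $R^\infty(K,\mu)$ pure and $1/f$ approximable in $L^2(\mu)$ by $\text{Rat}(K)$, is the crux; I would pursue it through explicit Cauchy-transform estimates together with the standard Thomson, Aleman--Richter--Sundberg, and Tolsa machinery for $R^2(K,\mu)$ and analytic capacity.
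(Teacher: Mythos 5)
Your proposal is correct and follows essentially the same route as the paper's own proof: take $\mu$ and $f$ from the Main Theorem, use the fact that the $R^2(K,\mu)\cap L^\infty(\mu)$ inverse $g$ of $f$ satisfies $gR^2(K,\mu)\subset R^2(K,\mu)$ to conclude that $f(S_\mu)$ is invertible, and apply Theorem~\ref{InvTheorem} contrapositively to place $0$ in the essential range $f(\sigma(S_\mu))$. The only differences are cosmetic: you spell out the multiplication-invariance argument that the paper asserts in one line, and your closing paragraph about the difficulty of the Main Theorem is unnecessary for this corollary, since that theorem is established separately in the paper.
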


\begin{proof}
Let $\mu$ and $f$ be as in Main Theorem. Then, by Main Theorem, there exists $f_0\in R^2(K,\mu) \cap L^\i (\mu)$ such that $ff_0 = 1$ and $f$ is not invertible in $R^\i (K,\mu).$ Since $f_0 R^2(K,\mu) \subset R^2(K,\mu),$ we see that $f_0(S_\mu)f(S_\mu) = f(S_\mu)f_0(S_\mu) = I$ which implies $0\notin \sigma(f(S_\mu)).$ Using Theorem \ref{InvTheorem}, we conclude that zero belongs to the essential range of $\rho_{\sigma(S_\mu), \mu} (f)$ in $L^\i (\area_{E(\sigma (S_\mu), \mu)}).$ Therefore, $0\in f(\sigma(S_\mu)).$
	\end{proof}

\section{\textbf{Proof of Main Theorem}}

The Cauchy transform of $\nu \in M_0(\C)$ is defined by
\[
\ \mathcal C\nu (z) = \int \dfrac{1}{w - z} d\nu (w)
\]
for all $z\in\mathbb {C}$ for which
$\int \frac{d|\nu|(w)}{|w-z|} < \infty .$ A standard application of Fubini's
Theorem shows that $\mathcal C\nu \in L^s_{loc}(\mathbb {C} )$ for $ 0 < s < 2.$ In particular, it is
defined for $\area-a.a..$

The elementary properties of $E(K,\mu)$ are listed below.

\begin{proposition}\label{EProp} Let $\mu \in M_0^+(K)$ for some compact subset $K\subset \C.$ If $\rimu$ is pure, then the following properties hold.
\newline
(a) $E(K,\mu)$ is the set of weak-star continuous homomorphisms on $\rikmu$ (see \cite[Proposition VI.2.5]{conway}).
\newline
(b) $E(K,\mu)$ is a nonempty $\area$ measurable set with area density one at each of its points (see \cite[Proposition VI.2.8]{conway}).
\end{proposition}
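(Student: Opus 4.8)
The plan is to treat the two parts separately, handling (a) by direct functional analysis and (b) through the Cauchy transform together with analytic capacity estimates. For part (a) the easy inclusion is that each $\lambda\in E(K,\mu)$ yields a weak-star continuous homomorphism, namely $f\mapsto \rho_{K,\mu}(f)(\lambda)=\int f\,d\mu_\lambda$: since $\mu_\lambda\ll\mu$ its density $k_\lambda$ lies in $L^1(\mu)$, so the functional is weak-star continuous, it agrees with evaluation at $\lambda$ on $\text{Rat}(K)$ hence is multiplicative there, and multiplicativity propagates to all of $\rikmu$ by weak-star density and continuity. For the converse I would start from an arbitrary weak-star continuous homomorphism $\phi$ and use that every weak-star continuous linear functional on $\limu$ is integration against some $g\in L^1(\mu)$; restricting to $\rikmu$ and setting $\lambda:=\phi(z)$, multiplicativity forces $\phi(f)=f(\lambda)$ for all $f\in\text{Rat}(K)$, so $g\,d\mu$ is a representing measure for $\lambda$ that is absolutely continuous with respect to $\mu$. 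The residual requirement $\mu_\lambda(\{\lambda\})=0$ is then supplied by purity, which excludes the atomic $L^\i$ summand that would otherwise force the representing measure to carry mass at $\lambda$.

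For the nonemptiness in part (b), I would use that purity makes $\rikmu$ a proper weak-star closed subalgebra of $\limu$ containing the constants and $z$, and invoke the $R^\i$-analogue of Thomson's theorem: a pure $\rikmu$ possesses a nonempty set of analytic bounded point evaluations, which is exactly $E(K,\mu)$. Concretely one produces a nonzero measure on $K$ annihilating $\text{Rat}(K)$ and locates a point at which its Cauchy transform behaves well enough to manufacture an absolutely continuous representing measure without point mass, but for the purposes of this proposition it suffices to cite the structure theory guaranteeing $E(K,\mu)\ne\emptyset$.

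The heart of the matter, and the step I expect to be the main obstacle, is the area-density-one assertion. The key identity is that for $\lambda_0\in E(K,\mu)$ with representing density $k_{\lambda_0}$, applying the definition to $f(w)=(w-\lambda)^{-1}\in\text{Rat}(K)$ for $\lambda\notin K$ gives $\mathcal C(k_{\lambda_0}\mu)(\lambda)=(\lambda_0-\lambda)^{-1}$, so the measure $\nu_{\lambda_0}:=k_{\lambda_0}\mu-\delta_{\lambda_0}$ has Cauchy transform vanishing off $K$. I would then estimate, on small disks $D(\lambda_0,r)$, the analytic capacity of the part of the disk lying outside $E(K,\mu)$ and show it is $o(r)$; combined with the standard comparison between analytic capacity and area and the semiadditivity estimates of Melnikov and Tolsa, this yields $\area\bigl(D(\lambda_0,r)\setminus E(K,\mu)\bigr)=o(r^2)$, which is precisely density one.

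The delicate point is controlling the Cauchy-transform and capacity estimates uniformly as $r\to 0$, converting the analyticity encoded in $\nu_{\lambda_0}$ into a quantitative statement about the area of the exceptional set. This is where the genuine analytic difficulty lies and is the step I would expect to consume most of the effort; parts (a) and the nonemptiness half of (b) are, by comparison, routine once the functional-analytic and structural inputs are in place.
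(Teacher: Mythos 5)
First, a remark on the comparison itself: the paper does not prove this proposition at all --- both parts are quoted from Conway's monograph (Propositions VI.2.5 and VI.2.8), so your sketch has to be measured against the classical Chaumat/Sarason-type arguments behind those citations rather than against anything in the text. Your part (a) is in substance that classical argument and is fine as an outline, except that the purity step is asserted where it actually needs a proof. The precise argument is: write a representing functional as $g\,d\mu = c\,\delta_\lambda + \nu$ with $\nu(\{\lambda\})=0$ and $c=g(\lambda)\mu(\{\lambda\})$; if $c\ne 1$, then $\nu/(1-c)$ is the required $\mu_\lambda$; if instead \emph{every} representing $g$ has $c=1$, then every $h\in L^1(\mu)$ annihilating $\rikmu$ satisfies $\int \chi_{\{\lambda\}}h\,d\mu=0$, so by the bipolar theorem $\chi_{\{\lambda\}}\in\rikmu$, which produces the direct summand $L^\i(\mu|_{\{\lambda\}})$ and contradicts purity. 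That is the content hidden in your phrase ``purity excludes the atomic summand.'' Also, nonemptiness in (b) is not a Thomson-type theorem (that attribution is anachronistic, and the weak-star setting is far easier): purity gives a nonzero $h\in L^1(\mu)$ annihilating $\rikmu$; $\mathcal C(h\mu)$ vanishes off $K$ but cannot vanish $\area$-a.e., and at $\area$-almost every point $\lambda$ with $\mathcal C(h\mu)(\lambda)\ne 0$ the measure $\frac{1}{\mathcal C(h\mu)(\lambda)}\frac{h(w)}{w-\lambda}\,d\mu(w)$ exhibits $\lambda\in E(K,\mu)$.

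The genuine gap is in the density-one argument, and it sits exactly at the step you defer. On $\D(\lambda_0,r)\setminus E(K,\mu)$ one has, $\area$-a.e., $\mathcal C\nu_{\lambda_0}=0$, i.e. $|\mathcal C(k_{\lambda_0}\mu)(\lambda)|=|\lambda-\lambda_0|^{-1}>1/r$. But applying a superlevel-set bound (weak-type or capacitary) to the \emph{whole} measure $k_{\lambda_0}\mu$ only yields capacity $O(r)$ and hence area $O(r^2)$ --- a bounded density ratio, not density one --- and nothing in your sketch identifies where the little-$o$ is to come from. The missing idea is that the normalization $\mu_{\lambda_0}(\{\lambda_0\})=0$ in the definition of the envelope must be exploited through a near/far splitting: write $k_{\lambda_0}\mu=\sigma_\epsilon+\sigma_\epsilon'$ with $\sigma_\epsilon$ the restriction to $\D(\lambda_0,\epsilon)$. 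For $\lambda\in\D(\lambda_0,r)$ with $r<\epsilon/2$ the far part obeys $|\mathcal C\sigma_\epsilon'(\lambda)|\le 2\|k_{\lambda_0}\|_{L^1(\mu)}/\epsilon$, so once $r$ is small the exceptional set lies in $\{|\mathcal C\sigma_\epsilon|>1/(2r)\}$, whose area is at most $C\|\sigma_\epsilon\|^2r^2$; since $\|\sigma_\epsilon\|\to|k_{\lambda_0}(\lambda_0)|\mu(\{\lambda_0\})=0$ as $\epsilon\to 0$ precisely by the no-atom condition, this is $o(r^2)$. It is this normalization, and only this, that converts $O(r^2)$ into $o(r^2)$; a proof that never invokes it cannot succeed. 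Note also that once the splitting is in place, the machinery you reach for (Melnikov curvature, Tolsa semiadditivity) is unnecessary: the elementary estimate $\area\{|\mathcal C\sigma|>t\}\le C(\|\sigma\|/t)^2$ (from $1/|z|\in L^{2,\i}$ and Young's inequality for Lorentz spaces), or equivalently the elementary bound $\area(F)\le 4\pi\gamma_+(F)^2$ together with duality for $\gamma_+$, closes the argument --- which is why Chaumat (1974) and Conway (1991) could prove this result three decades before Tolsa's theorem existed.
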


For $\lambda\in \C$ and $\delta > 0,$ let $\D(\lambda,\delta) = \{z:~ |z - \lambda | < \delta\}.$ Set $\D = \D(0,1).$
We first construct a compact subset $K$ satisfying the following conditions.
\newline
(1) There exists a sequence of disjoint closed disks $\overline{\D(z_n,r_n)} \subset \D(0,\frac 12)$ such that $K_1 := \overline{\D(0,\frac 12)} \setminus \cup_{n=1}^\i \D(z_n,r_n)$ has no interior and $0\in K_1.$
\newline
(2) There exist $\delta_n$ and $r_n$ with $0 < \frac{r_n}{2} \le \delta_n < r_n$ such that $K = K_1 \cup \cup_{n=0}^\i A_n,$ where $A_n = \{z:~\delta_n \le |z - z_n| \le r_n\},$ $z_0 = 0,$ $\delta_0 = \frac 12,$ and $r_0 = 1.$
\newline
(3) 
\[
\ \sum_{m=1}^\i 2^m\sum_{2^{-m-1} \le |z_n| < 2^{-m}}r_n < \i.
\]
\newline
(4) There exists a sequence $\{f_n\}\subset \text{Rat}(K)$ such that $f_n(z)$ is analytic on $\D\setminus \cup_{k=1}^n \overline {\D(z_k,\delta_k)},$ $f_n(0) = 0,$ $\frac 12 < |f_n(z)| < 2$ for $z\in \cup_{k=0}^n A_k.$ 
\newline
(5) The envelop $E(K, \area_K)$ satisfies $\area(K\setminus E(K, \area_K)) = 0.$
\newline
(6) $0\in E(K, \area_K).$ Therefore, $\phi_0(b) = \rho_{K, \area_K}(b)(0) = \int bg_0d\area_K$ for $b\in R^\i(K, \area_K)$ is a weak-star continuous homomorphism on $R^\i(K, \area_K).$
\newline
(7) There exists $f\in R^\i(K, \area_K)$ such that $\|f\| \le 2,$ $|f(z)| \ge \frac 12$ for $z\in \cup_{n=0}^\i A_n,$ and $\phi_0(f) = 0.$ Hence, $f$ is not invertible in $R^\i(K, \area_K).$

\begin{proof}
Let $\{\lambda_k\}\subset \D(0,\frac 12)\setminus \{0\}$ be a dense subset of $\D(0,\frac 12),$ $|\lambda_1 | > \frac 13,$ and $z_1 = \lambda_{k_1},$ where $k_1 = 1.$ Let
$f_1(z) = \frac 32 z.$ We choose $r_1, \delta_1$ small enough such that $\frac {r_1}{2} \le\delta_1 < r_1 < \frac 12\min(\frac 12 - |z_1|, |z_1| - \frac 13).$ Then $\frac 12 < |f_1(z)| < 2$ for $z \in A_0 \cup A_1.$ 

Suppose that we have constructed $f_1,f_2,...,f_n$ with $z_n = \lambda_{k_n}$ satisfying $\lambda_j \in \cup_{k=1}^n \overline {\D(z_k,r_k)}$ for $1 \le j \le k_n$ and the condition (4). We now construct $f_{n+1}$ as the following. We find $k_{n+1} > k_n$ such that $\lambda_{k_n+1},...,\lambda_{k_{n+1}-1} \in \cup_{k=1}^n \overline {\D(z_k,r_k)}$ and $\lambda_{k_{n+1}} \notin \cup_{k=1}^n \overline {\D(z_k,r_k)}.$ Set $z_{n+1} = \lambda_{k_{n+1}}.$ Let $m$ be the integer such that $2^{-m-1} \le |z_{n+1}| < 2^{-m}.$ Let $a_{n+1} = f_n(z_{n+1})$ and let $\frac {r_{n+1}}{2} \le\delta_{n+1} <r_{n+1} < 2^{-n-m-1}$ be chosen below.

If $|a_{n+1}| > \frac 12,$ then set $f_{n+1}(z) = f_n(z).$ We choose $r_{n+1}$ small enough such that $|f_{n+1}(z)| > \frac 12$ for $z\in \overline{\D(z_{n+1},r_{n+1})}$ and set $\delta_{n+1} = \frac{r_{n+1}}{2}.$

Now we assume that $|a_{n+1}| \le  \frac 12.$ Let 
\[
\ b_{n+1}(z) = \dfrac 54 \dfrac{z}{|z_{n+1}|} \frac{r_{n+1}}{z-z_{n+1}}.
\] 
Let $min_n = \min_{z\in \cup_{k=0}^n A_k} |f_n(z)|$ and $max_n = \max_{z\in \cup_{k=0}^n A_k} |f_n(z)|.$ Then $\frac 12 < min_n < max_n < 2.$ Choose $r_{n+1}$ and $\delta_{n+1}$ small enough such that 
\[
\ \max_{z\in \cup_{k=0}^n A_k} |b_{n+1}(z)| < \min (min_n - \frac 12, 2-max_n)
\]
and $1 < |b_{n+1}(z)| < \frac 32$ for $z\in A_{n+1}.$
Then $f_{n+1}(z) = f_n(z) + b_{n+1}(z)$ satisfies the condition (4).
It is easy to verify (1)-(3). 

Let $\mu$ be the arc length measure of $\partial_e K = \partial \D \cup \cup _{n=1}^\i \partial \D(z_n, \delta_n).$ Let $g = \frac{1}{2\pi i}\frac{dz}{d\mu}.$ Then $|g(z)| =  \frac{1}{2\pi}~\mu-a.a.$ and $\int bgd\mu = 0$ for $b \in R^\i (K,\mu).$ Thus, $R^\i (K,\mu)$ is pure.
For $\lambda \in K \setminus \partial_e K$ with $\int \frac{|g(z)|}{|z - \lambda|} d \mu(z) < \i$ and $h\in \text{Rat}(K),$ we have 
\begin{eqnarray}\label{Eq0}
\ h(\lambda) = \int_{\partial \D \cup \cup _{n=1}^m \partial \D(z_n, \delta_n)} \dfrac{h(z)g(z)}{z - \lambda} d \mu(z) \rightarrow \int h(z)g_\lambda (z) d\mu(z) \text{ as }m\rightarrow \i, 
\end{eqnarray}
where $g_\lambda (z) = \frac{g(z)}{z - \lambda}.$ Hence, $\lambda\in E(K,\mu),$ which implies $K = \overline {E(K,\mu)}$ and $\area (K \setminus E(K,\mu)) = 0$ since $\int \frac{|g(z)|}{|z - \lambda|} d \mu(z) < \i~\area-a.a..$ 

Using Chaumat's theorem (\cite{cha74} or \cite[Chaumat's Theorem on page 288]{conway}),
the Chaumat's mapping $\rho_{K,\mu}$ is an isometric isomorphism and a weak$^*$ homeomorphism from $\rikmu$ onto $R^\i (\overline {E(K,\mu)}, \area_{E(K,\mu)}) = R^\i (K, \area_K).$ Hence, $E(K,\area_K) = E(K,\mu)$ by Proposition \ref{EProp} (a).
(5) is proved.

Using (3), we have
$\int \frac{|g(z)|}{|z|} d \mu(z) < \i$ Hence, $0\in E(K,\area_K).$ (6) follows from Proposition \ref{EProp} (a).

Since $\|f_n\| \le 2$ in (4), we can choose a sequence $\{f_{n_j}\}$ such that $f_{n_j}$ converges to $f\in R^\i (K, \area_K)$ in weak-star topology. It is straightforward to verify the condition (7) for $f.$
\end{proof}

\begin{lemma} \label{ExampleLemma}
Let $K$ be a compact subset satisfying (1)-(7). Then there exists a continuous function $W(z)$ on $\C$ satisfying $W(z) > 0$ for $z\in \cup_{n=0}^\i \text{int}(A_n)$ and $W(z) = 0$ for $z\in \C \setminus \cup_{n=0}^\i \text{int}(A_n)$ such that the following properties hold.
\newline
(a) The following decomposition hold:
\[
\ R^2(K, W\area) = \bigoplus_{n=0}^\i R^2(A_n, W\area_{A_n}). 
\]
\newline
(b) $R^\i(K, \area_K) \subset R^\i(K, W\area),$ $\area (K \setminus E(K, W\area)) = 0,$ and $0\in E(K, W\area).$ 
\newline
(c) Let $f$ be the function constructed as in (7). Then $f$ is invertible in $R^2(K, W\area) \cap L^\i(W\area),$ but $f$ is not invertible in $R^\i(K, W\area).$
\end{lemma}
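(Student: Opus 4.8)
The plan is to exhibit $W$ with a prescribed radial profile on each annulus $A_n$, and then to verify (a)--(c) in turn, treating the $L^2$--splitting in (a) as the technical heart and regarding (c) as a formal synthesis of (a), (b), and condition (7). I would first dispose of the easy half of (a): since $W$ is supported on the pairwise disjoint sets $\text{int}(A_n)$, one has the orthogonal identification $L^2(W\area)=\bigoplus_n L^2(A_n,W\area_{A_n})$, and because every $q\in\text{Rat}(K)$ restricts on $A_n$ to a member of $\text{Rat}(A_n)$, the restriction of $R^2(K,W\area)$ to $A_n$ lands in $R^2(A_n,W\area_{A_n})$; hence $R^2(K,W\area)\subseteq\bigoplus_n R^2(A_n,W\area_{A_n})$ for free. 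For the reverse inclusion it suffices to show $\chi_{A_n}g_n\in R^2(K,W\area)$ for each $n$ and each $g_n\in R^2(A_n,W\area_{A_n})$. By Runge's theorem on the annulus $A_n$, whose complement has the two components $\D(z_n,\delta_n)$ and $\C\setminus\D(z_n,r_n)$, the Laurent polynomials in $(z-z_n)$ are dense in $R^2(A_n,W\area_{A_n})$; since such a polynomial has its only pole at $z_n\notin K$, it already belongs to $\text{Rat}(K)$. Thus everything reduces to producing, for a Laurent polynomial $p$ centered at $z_n$, global rational functions agreeing with $p$ on $A_n$ up to small $L^2(W\area)$ error while being $L^2(W\area)$--small on every other annulus.

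This decoupling is the main obstacle. The cheese $K_1$ separates no annulus from the rest, so no contour in $\C\setminus K$ encircles a single $A_n$, and $\chi_{A_n}\in R^2(K,W\area)$ must be \emph{forced} by the construction rather than read off from the geometry. My plan is to build the separating approximants by hand from the scale organization $2^{-m-1}\le|z_n|<2^{-m}$: the powers $z^k$ already damp every inner annulus relative to the outer $A_0$, while the centered factors $(\delta_n/(z-z_n))^k$ damp the complement of $\D(z_n,r_n)$, so that suitable products, selected scale by scale, are close to $1$ on $A_n$ and uniformly small on all but finitely many other annuli. The profile of $W$ on each $A_l$ is then to be chosen to decay fast enough at $|z-z_l|=\delta_l,r_l$ that the finitely many non-negligible residual overlaps contribute a summable $L^2(W\area)$ error, removed by an iterative correction that converges in $L^2(W\area)$. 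I expect this quantitative separation---reconciling the multiplicative scale of the monomials with the additive structure of $L^2$---to be the most delicate step, and the decay rate it demands of $W$ is precisely the constraint the remainder of the argument must respect.

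With (a) in hand, (b) is comparatively formal. For $R^\i(K,\area_K)\subseteq R^\i(K,W\area)$ I would take, for $h\in R^\i(K,\area_K)$, a uniformly bounded sequence of rationals converging weak-$*$ to $h$ in $L^\i(\area_K)$ (available since $L^1(\area_K)$ is separable); because $W$ is bounded, multiplication by $W$ embeds $L^1(W\area)$ into $L^1(\area_K)$, so the same sequence converges weak-$*$ to $h$ in $L^\i(W\area)$, giving $h\in R^\i(K,W\area)$. For the envelope statements I would manufacture representing measures $\ll W\area$ by \emph{thickening} the arc-length representing measures of (5)--(6): for $q\in\text{Rat}(K)$ the integral $\frac1{2\pi i}\oint_{|z-z_n|=s}\frac{q(z)}{z-\lambda}\,dz$ is independent of $s\in(\delta_n,r_n)$, since $q$ has no pole in $\text{int}(A_n)$, so averaging it against a positive radial profile supported where $W>0$ replaces each boundary-circle contribution by an area integral against a measure $\ll W\area$; the outer circles $|z|=s$, $s\in(1/2,1)$, are handled the same way on $A_0$. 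Summability of these contributions follows from $\sum_n r_n<\i$ and, at $\lambda=0$, from (3), yielding $\area(K\setminus E(K,W\area))=0$ and $0\in E(K,W\area)$.

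Finally, for (c): on each $A_n$ the restriction $f|_{A_n}$ lies in $R^\i(A_n,\area_{A_n})$ and its Chaumat value is the analytic function $f$, which satisfies $|f|\ge\tfrac12$ by (7); Theorem \ref{InvTheorem} applied to $A_n$ then shows $f|_{A_n}$ is invertible there, so $1/f\in R^\i(A_n,\area_{A_n})\subseteq R^2(A_n,W\area_{A_n})$ with $\|1/f\|_\i\le2$. Assembling these over $n$ and invoking (a) gives $1/f\in R^2(K,W\area)\cap L^\i(W\area)$ with $f\cdot(1/f)=1$, so $f$ is invertible in $R^2(K,W\area)\cap L^\i(W\area)$. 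On the other hand $0\in E(K,W\area)$ supplies a weak-$*$ continuous homomorphism $\phi_0$ on $R^\i(K,W\area)$, and since $f\in R^\i(K,\area_K)\subseteq R^\i(K,W\area)$ is a weak-$*$ limit of the $f_{n_j}$ with $f_{n_j}(0)=0$, continuity of $\phi_0$ forces $\phi_0(f)=0$; as $\phi_0$ is multiplicative with $\phi_0(1)=1$, $f$ cannot be invertible in $R^\i(K,W\area)$. The conceptual point, and the reason the example exists, is that $\chi_{A_n}$ and $1/f$ belong to $R^2(K,W\area)\cap L^\i(W\area)$ but not to $R^\i(K,W\area)$: the $L^2$--closure splits off the annuli, while the weak-$*$ closure retains the evaluation $\phi_0$ at the cheese point $0$.
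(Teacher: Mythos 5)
Your parts (b) and (c) are essentially sound: the embedding $R^\i(K,\area_K)\subset R^\i(K,W\area)$ via the pairing $\int q\,(kW)\,d\area_K$ is correct, your ``thickened'' representing measures (averaging the circle integrals over radii in $(\delta_n,r_n)$ against a radial profile) give a somewhat more direct proof of the envelope statements than the paper's (which instead verifies that the contour functional $\phi_\lambda(h)=\frac{1}{2\pi i}\int_T \frac{h(z)}{z-\lambda}\,dz$ is a weak-star continuous homomorphism and invokes Proposition \ref{EProp}(a)), and your argument that $\phi_0(f)=0$ via weak-star convergence of the $f_{n_j}$ correctly supplies the non-invertibility in $R^\i(K,W\area)$. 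The problem is part (a), which you yourself call the technical heart: you do not prove it. What you give is a plan --- ``products selected scale by scale,'' a decay rate for $W$ ``to be chosen,'' an ``iterative correction'' --- and the plan runs into a concrete obstruction that it never addresses. The centers $\{z_n\}$ are dense in $\D(0,\frac12)$, so any annulus $A_n$ has infinitely many annuli of comparable scale arbitrarily close to it, separated from it only by the Swiss cheese $K_1$, where no poles are permitted; the dyadic organization $2^{-m-1}\le|z_n|<2^{-m}$ gives no mechanism to make a rational function nearly $1$ on $A_n$ and small on these same-scale neighbors. Worse, your basic building block works against you: on $A_n$ one has $\delta_n\le|z-z_n|\le r_n$ with $\delta_n/r_n<1$, and $W$ vanishes on the inner circle, so $(\delta_n/(z-z_n))^k\to 0$ in $L^2(W\area_{A_n})$ as $k\to\i$; large powers cannot stay ``close to $1$ on $A_n$,'' while small powers give only weak damping elsewhere. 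The iterative correction faces the identical separation problem at every step, so no convergence argument is available. In short, the one statement everything hinges on --- $\chi_{A_n}\in R^2(K,W\area)$ --- is left unestablished, and the proposed route to it is unlikely to be repairable.

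The paper proves (a) by duality rather than by constructing approximants, and this is precisely what the fourth-order vanishing of $W$ is for. Take $g\in R^2(K,W\area)^\perp$. Since $W(z)\le|z-\lambda|^4$ for $z\in A_n$ and $\lambda\notin A_n$, the Cauchy transform $\mathcal C(gW\area)$ is well defined at every $\lambda\in K_1$, with the Lipschitz-type bound $|\mathcal C(gW\area)(\lambda)-\mathcal C(gW\area)(z_n)|\le\sqrt\pi\,|z_n-\lambda|\,\|g\|$. Because $(z-z_n)^{-1}\in\text{Rat}(K)$, orthogonality gives $\mathcal C(gW\area)(z_n)=0$ at every center, and since every point of $K_1$ is a limit of centers (the $\lambda_k$ are dense and $r_n\to0$), it follows that $\mathcal C(gW\area)\equiv 0$ on $K_1$. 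In particular it vanishes on $\partial\D(z_n,r_n)\subset K_1$, so Fubini plus the Cauchy integral formula yield
\[
\ \int \chi_{A_n} g\, W\,d\area = -\dfrac{1}{2\pi i}\int_{\partial\D(z_n,r_n)}\mathcal C(gW\area)(z)\,dz = 0,
\]
whence $\chi_{A_n}\in R^2(K,W\area)$; the identification $R^2(K,W\area_{A_n})=R^2(A_n,W\area_{A_n})$ then follows from Runge's theorem exactly as in your ``easy half.'' This annihilator argument never needs a rational function that is large on one annulus and small on its neighbors --- it sidesteps exactly the step your approach cannot complete. If you want to salvage your write-up, keep your (b) and (c) and replace the approximation scheme in (a) by this Cauchy-transform duality argument, noting that it is also what dictates the specific (quartic) choice of $W$.
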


\begin{proof}
Define
\[
\ W(z) = \begin{cases} (r_n - |z - z_n|)^4, & \dfrac{\delta_n + r_n}{2} < |z - z_n| \le r_n\\ (|z - z_n| - \delta_n)^4, & \delta_n  \le |z - z_n| \le \dfrac{\delta_n + r_n}{2}\\ 0, & z\in \C \setminus \cup_{n=0}^\i A_n.  \end{cases}
\]
Then $W(z)$ is continuous on $\C$ such that $W(z) > 0$ for $z\in \cup_{n=0}^\i \text{int}(A_n)$ and $W(z) = 0$ for $z\in \C \setminus \cup_{n=0}^\i \text{int}(A_n).$ Moreover, for $\lambda \notin A_n$ and $z\in A_n,$ we have
\[
\ W(z) \le |z-\lambda|^4\text{ and }W(z) \le |z-z_n|^4.	
\]
For $\lambda \in \overline{\D(0,\frac 12)} \setminus \cup_{n=1}^\i \D(z_n,r_n)$ and $g \in R^2(K, W\area)^\perp := \{g\in L^2(W\area):~ \int h(z)g(z) d\mu = 0 \text{ for }h \in \text{Rat}(K)\},$
\begin{eqnarray}\label{Eq1}
\ \int \dfrac{|g(z)|}{|z-\lambda|}Wd\area \le \left (\int \dfrac{1}{|z-\lambda|^2}Wd\area \right )^{\frac 12} \|g\| \le \sqrt\pi \|g\|.
\end{eqnarray}
Hence, $\mathcal C(gW\area)(\lambda)$ is well defined. Also we have
\[
\begin{aligned}
\ & \left | \mathcal C(gW\area)(\lambda) -   \mathcal C(gW\area)(z_n)\right | \\
\ \le &	|z_n - \lambda| \left (\int \dfrac{1}{|z-z_n|^4}Wd\area \right )^{\frac 14} \left (\int \dfrac{1}{|z-\lambda|^4}Wd\area \right )^{\frac 14} \|g\| \\
\ \le &	\sqrt \pi |z_n - \lambda| \|g\|.
\end{aligned}
\]
By construction, we select a subsequence $\{z_{n_k}\}$ such that $z_{n_k}$ tends to $\lambda,$ which implies $\mathcal C(gW\area)(\lambda) = 0$ since $\mathcal C(gW\area)(z_n) = 0.$ Therefore, 
\begin{eqnarray}\label{Eq2}
\ \mathcal C(gW\area)(\lambda) = 0, ~ \lambda \in \overline{\D(0,\frac 12)} \setminus \cup_{n=1}^\i \D(z_n,r_n).
\end{eqnarray}
Thus, using \eqref{Eq1}, Fubini's Theorem, and \eqref{Eq2}, we get for $n \ge 1,$
\[
\begin{aligned}
\ \int \chi_{A_n}g Wd\area = & \dfrac{1}{2\pi i} \int \int_{\partial \D(z_n,r_n)} \dfrac{dz}{z - w} gWd\area(w) \\ 
\ = & - \dfrac{1}{2\pi i}\int_{\partial \D(z_n,r_n)} \mathcal C(gW\area)(z)dz \\
\ = & 0,
\end{aligned}
\]
where $\chi_{A_n}$ denotes the characteristic function of $A_n.$ Hence, $\chi_{A_n}\in R^2(K, W\area)$ for $n \ge 1.$ Thus, $\chi_{A_0}\in R^2(K, W\area)$ since $\chi_{A_0} = 1 - \sum_{n=1}^\i \chi_{A_n},~W\area-a.a..$ It is clear that $R^2(K, W\area_{A_n}) = R^2(A_n, W\area_{A_n}).$ (a) is proved.

For (b): we have, 
\[
\ R^\i(K, \area_K) \subset R^\i(K, W\area) \text{ and } \cup_{n=0}^\i \text{int}(A_n) \subset E(K, W\area). 
\]
Let $T=\partial\D(0,\frac{3}{4})\cup \cup_{n=1}^\i \partial \D(z_n,\frac{\delta_n+r_n}{2}).$ Using the same argument as in \eqref{Eq0},  for $\lambda \in \overline{\D(0,\frac{1}{2})}\setminus \cup_{n=1}^\i  \D(z_n,r_n)$ with $\int_T \frac{1}{|z - \lambda|} |dz| < \i,$ we conclude that 
\[
\ \phi_\lambda (r) = \dfrac{1}{2\pi i}\int_T \dfrac{r(z)}{z - \lambda} d z = r(\lambda) \text{ for } r\in\text{Rat}(K). 
\]
For $h\in R^\i(K, W\area),$ let $\{h_n\}\subset \text{Rat}(K)$  such that $h_n\rightarrow h$ in $L^\i(W\area)$ weak-star topology. Then there exists $C_1 > 0$ such that
$\|h_n\|_{L^\i(W\area)} \le C_1,$ $h_n(z)\rightarrow h(z)$ uniformly on any compact subset of $\cup_{n=0}^\i \text{int}(A_n),$ and $h$ is analytic on $\cup_{n=0}^\i \text{int}(A_n).$ Hence, $h_n(z)\rightarrow h(z),~z\in T,~ |dz|-a.a..$ Using the Lebesgue dominated convergence theorem, we have
\[
\ \dfrac{1}{2\pi i}\int_T \dfrac{h_n(z)}{z - \lambda} d z = h_n(\lambda) \rightarrow \dfrac{1}{2\pi i}\int_T \dfrac{h(z)}{z - \lambda} d z.
\]
Therefore, $\phi_\lambda$ extends a weak-star continuous homomorphism on $R^\i(K, W\area),$ which implies $\lambda \in E(K, W\area)$ by Proposition \ref{EProp} (a). Thus, $\area (K \setminus E(K, W\area)) = 0$ since  $\int_T \frac{1}{|z - \lambda|} |dz| < \i~\area-a.a..$ $0 \in E(K, W\area)$ follows from (3).

(c): Clearly,
\[
\ R^2(A_n, W\area_{A_n}) \cap L^\i (W\area_{A_n}) = R^\i (A_n, \area_{A_n}) = H^\i (\text{int}(A_n)),
\]
where $H^\i (\text{int}(A_n))$ is the algebra of bounded and analytic functions on $\text{int}(A_n).$
Since $|f(z)| \ge \frac 12, ~ z\in \cup_{n=0}^\i A_n,$ the function $f|_{A_n}$ is invertible in $R^2(A_n, W\area_{A_n}) \cap L^\i (W\area_{A_n})$ and $\|(f|_{A_n})^{-1}\| \le 2.$ Define
$h = \sum _{n=0}^\i (f|_{A_n})^{-1}.$
Then $\|h\| \le 2,$ $h \in R^2(K, W\area) \cap L^\i (W\area),$ and $fh = 1,~W\area-a.a..$ Therefore, 
 we conclude that $f$ is invertible in $R^2(K, W\area) \cap L^\i (W\area)$ and $f$ is not invertible in $R^\i(K, W\area)$ by $0 \in E(K, W\area)$ and (7). This proves (c).
\end{proof}

The proof of Main Theorem follows from Lemma \ref{ExampleLemma}.

\bigskip

{\bf Acknowledgments.} 
The author would like to thank Professor John M\raise.45ex\hbox{c}Carthy for carefully reading through the manuscript and providing many useful comments.

\bigskip

\bibliographystyle{amsplain}

\end{document}